\newtheorem{theorem}{Theorem}
\newtheorem{lemma}[theorem]{Lemma}
\newtheorem{proposition}[theorem]{Proposition}
\newtheorem{corollary}[theorem]{Corollary}
\theoremstyle{definition}
\newtheorem{example}[theorem]{Example}
\definecolor{webgreen}{rgb}{0,.5,0}
\definecolor{webbrown}{rgb}{.6,0,0}
\newcommand{\seqnum}[1]{\href{http://www.research.att.com/cgi-bin/access.cgi/as/~njas/sequences/eisA.cgi?Anum=#1}{\underline{#1}}}
\begin{document}

\begin{center}
\vskip 1cm{\LARGE\bf On the Halves of a Riordan Array and Their Antecedents} \vskip 1cm \large
Paul Barry\\
School of Science\\
Waterford Institute of Technology\\
Ireland\\
\href{mailto:pbarry@wit.ie}{\tt pbarry@wit.ie}
\end{center}
\vskip .2 in

\begin{abstract} Every Riordan array has what we call a horizontal half and a vertical half. These halves of a Riordan array have been studied separately before. Here, we place them in a common context, showing that one may be obtained from the other. Using them, we provide a canonical factorization of elements of the associated or Lagrange subgroup of the Riordan group. The vertical half matrix is shown to be an element of the hitting-time group. We also ask and answer the question: given a Riordan array, when is it the half (either horizontal of vertical) of a Riordan array?\end{abstract}

\section{Preliminaries on Riordan arrays}
We recall some facts about Riordan arrays in this introductory section. Readers familiar with Riordan arrays may wish to move on to the next section. A Riordan array is defined by a pair of power series
$$g(x)=g_0 + g_1 x + g_2 x^2 + \cdots=\sum_{n=0}^{\infty} g_n x^n,$$ and
$$f(x)=f_1 x + f_2 x^2+ f_3 x^3 + \cdots = \sum_{n=1}^{\infty} f_n x^n.$$
We require that $g_0 \ne 0$ (and hence $g(x)$ be invertible, with inverse $\frac{1}{g(x)}$), while we also demand that
$f_0=0$ and $f_1 \ne 0$ (hence $f(x)$ has a compositional inverse $\bar{f}(x)=\text{Rev}(f)(x)$ defined by $f(\bar{f}(x))=x$).
The set of such pairs $(g(x), f(x))$ forms a group (called the Riordan group \cite{SGWW}) with multiplication
$$(g(x), f(x)) \cdot (u(x), v(x))=(g(x)u(f(x)), v(f(x)),$$ and with inverses given by
$$(g(x), f(x))^{-1}=\left(\frac{1}{g(\bar{f}(x))}, \bar{f}(x)\right).$$
The coefficients of the power series may be drawn from any ring (for example, the integers $\mathbb{Z}$) where these operations make sense. To each such ring there exists a corresponding Riordan group.

There is a matrix representation of this group, where to the element $(g(x), f(x))$ we associate the matrix
$\left(a_{n,k}\right)_{0 \le n,k \le \infty}$ with general element
$$a_{n,k}=[x^n] g(x)f(x)^k.$$
Here, $[x^n]$ is the functional that extracts the coefficient of $x^n$ in a power series. In this representation, the group law corresponds to ordinary matrix multiplication, and the inverse of $(g(x), f(x))$ is represented by the inverse of $\left(a_{n,k}\right)_{0 \le n,k \le \infty}$.

The Fundamental Theorem of Riordan arrays is the rule
$$(g(x), f(x))\cdot h(x)=g(x)h(f(x)),$$ detailing how an array $(g(x), f(x))$ can act on a power series. This corresponds to the matrix $(a_{n,k})$ multiplying the vector $(h_0, h_1, h_2,\ldots)^T$.

\begin{example} Pascal's triangle, also known as the binomial matrix, is defined by the Riordan group element
$$\left(\frac{1}{1-x}, \frac{x}{1-x}\right).$$ This means that we have
$$\binom{n}{k}=[x^n] \frac{1}{1-x} \left(\frac{x}{1-x}\right)^k.$$
To see that this is so, we need to be familiar with the rules of operation of the functional $[x^n]$ \cite{MC}.
We have
\begin{align*}
[x^n] \frac{1}{1-x} \left(\frac{x}{1-x}\right)^k&=[x^n] \frac{x^k}{(1-x)^{k+1}}\\
&= [x^{n-k}] (1-x)^{-(k+1)}\\
&= [x^{n-k}] \sum_{j=0}^{\infty} \binom{-(k+1)}{j}(-1)^j x^j\\
&= [x^{n-k}] \sum_{j=0}^{\infty} \binom{k+1+j-1}{j}x^j\\
&= [x^{n-k}] \sum_{j=0}^{\infty} \binom{k+j}{j} x^j \\
&= \binom{k+n-k}{n-k}=\binom{n}{n-k}=\binom{n}{k}.\end{align*}
\end{example}

The binomial matrix is an element of the Bell subgroup of the Riordan group, consisting of arrays of the form $(g(x), xg(x))$. It is also an element of the hitting time subgroup, which consists of arrays of the form
$\left(\frac{x f'(x)}{f(x)}, f(x)\right)$. Arrays of the form $(1, f(x))$ belong to the associated or Lagrange subgroup of the Riordan group.

Note that all the arrays in this note are lower triangular matrices of infinite extent. We show appropriate truncations.

Many examples of sequences and  Riordan arrays are documented in the On-Line Encyclopedia of Integer Sequences (OEIS) \cite{SL1, SL2}. Sequences are frequently referred to by their
OEIS number. For instance, the binomial matrix $\mathbf{B}=\left(\frac{1}{1-x}, \frac{x}{1-x}\right)$ (``Pascal's triangle'') is \seqnum{A007318}. In the sequel we will not distinguish between an array pair $(g(x), f(x))$ and its matrix representation.

Having written this note, the author discovered the paper \cite{Bala} by Peter Bala, written originally in 2015, which covers some of the ground of this paper and can be read as a complementary text.

\section{The vertical and horizontal halves of a Riordan array}
Given a Riordan array $M=(g(x), f(x))=(g(x), xh(x))$ with matrix representation $\left(T_{n,k}\right)$ we shall denote by its \emph{vertical half} the matrix $M_V$ with general $(n,k)$-th term $T_{2n-k,n}$.
We have the following result \cite{Yang2, Yang1}.
\begin{lemma}
Given a Riordan array $M=(g(x), f(x))$, its vertical half $V$ is the Riordan array
$$V=\left(\frac{\phi(x)\phi'(x) g(\phi(x))}{f(\phi(x))}, \phi(x)\right)=\left(\frac{x \phi'(x) g(\phi(x))}{\phi(x)}, \phi(x)\right),$$ where
$$\phi(x)=\text{Rev}\left(\frac{x^2}{f(x)}\right).$$
\end{lemma}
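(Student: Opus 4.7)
The plan is to express the $(n,k)$-entry of $V$, namely $T_{2n-k,n}=[x^{2n-k}]g(x)f(x)^n$, as a formal residue and evaluate it by the change of variables $x=\phi(u)$. The whole proof reduces to one calculation once the substitution is set up correctly.

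First I would confirm that $\phi=\text{Rev}(x^2/f(x))$ is well defined. Writing $f(x)=xh(x)$ with $h(0)=f_1\neq 0$, we have $x^2/f(x)=x/h(x)$, whose linear coefficient $1/f_1$ is nonzero, so the compositional inverse exists. By construction $\phi(x)^2/f(\phi(x))=x$, that is
$$f(\phi(x))=\frac{\phi(x)^2}{x}. \qquad(\ast)$$
Note that $(\ast)$ immediately shows $\phi(x)\phi'(x)g(\phi(x))/f(\phi(x)) = x\phi'(x)g(\phi(x))/\phi(x)$, so the two forms asserted in the lemma coincide and it suffices to establish either one.

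Next I would compute. Writing
$$V_{n,k} = [x^{2n-k}]g(x)f(x)^n = \text{Res}_{x=0}\frac{g(x)f(x)^n}{x^{2n-k+1}}\,dx,$$
I apply the formal substitution $x=\phi(u)$, $dx=\phi'(u)\,du$, which is valid since $\phi(0)=0$ and $\phi'(0)=f_1\neq 0$. Using $(\ast)$ to replace $f(\phi(u))^n$ by $\phi(u)^{2n}/u^n$, the powers of $\phi(u)$ telescope and leave
$$V_{n,k} = \text{Res}_{u=0}\frac{g(\phi(u))\,\phi(u)^{k-1}\,\phi'(u)}{u^n}\,du = [u^{n-1}]\,g(\phi(u))\phi(u)^{k-1}\phi'(u).$$

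Finally, I would repackage this as a Riordan entry. Setting $\tilde g(u)=u\phi'(u)g(\phi(u))/\phi(u)$, one has
$$[u^n]\,\tilde g(u)\phi(u)^k = [u^n]\,u\phi'(u)g(\phi(u))\phi(u)^{k-1} = [u^{n-1}]\,\phi'(u)g(\phi(u))\phi(u)^{k-1}=V_{n,k},$$
so $V=(\tilde g(x),\phi(x))$ as claimed. The only subtle step is the residue change of variables, which is the standard rule $\text{Res}_{x=0}F(x)\,dx=\text{Res}_{u=0}F(\phi(u))\phi'(u)\,du$ for formal Laurent series, legitimate precisely because $\phi$ is a formal power series with $\phi(0)=0$ and $\phi'(0)\neq 0$; every other manipulation is purely algebraic and driven by the identity $(\ast)$.
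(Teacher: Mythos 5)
Your proof is correct, and it establishes exactly the identity the lemma needs: $T_{2n-k,n}=[u^{n-1}]\,g(\phi(u))\phi(u)^{k-1}\phi'(u)=[u^n]\,\frac{u\phi'(u)g(\phi(u))}{\phi(u)}\phi(u)^k$. The underlying idea is the same as the paper's---everything is driven by the substitution $x=\phi(u)$ with $\phi=\text{Rev}(x^2/f)$ and the identity $f(\phi(x))=\phi(x)^2/x$---but the technical packaging differs. The paper writes $f=xh$ (so that $\text{Rev}(x/h)=\text{Rev}(x^2/f)=\phi$), inserts a factor $\frac{n+1}{n+1}$, introduces a primitive $G$ of $gx^k/h$ in order to invoke the classical Lagrange inversion formula $\frac{1}{n+1}[x^n]G'(x)h(x)^{n+1}=[x^{n+1}]G\left(\text{Rev}\left(\frac{x}{h}\right)\right)$, and then differentiates again via the chain rule. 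You instead apply the residue composition rule $\text{Res}_{x}F(x)=\text{Res}_{u}F(\phi(u))\phi'(u)$ (valid because $\phi$ has order one) directly to $g(x)f(x)^n/x^{2n-k+1}$, which collapses the paper's middle steps into a single substitution and avoids the auxiliary antiderivative entirely. The two devices are equivalent formulations of Lagrange inversion, so nothing is gained in generality, but your version is shorter and makes the role of your identity $(\ast)$ more transparent. Two small points worth adding for completeness: $\tilde g(0)=g_0\neq 0$ (since $u\phi'(u)/\phi(u)=1+\cdots$), so the pair $(\tilde g,\phi)$ is a legitimate Riordan array; and the Laurent series $g(x)f(x)^n x^{-(2n-k+1)}$ has order at least $k-n-1$, hence only finitely many negative-degree terms, as the residue composition rule requires.
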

\begin{proof}
We have
\begin{align*}T_{2n-k,n}&=[x^{2n-k}]g(x)(xh(x))^n\\
&=[x^n]g(x)x^k h^n\\
&=(n+1) \frac{1}{n+1} [x^n] \frac{gx^k}{h} h^{n+1}\\
&=(n+1) [x^{n+1}] G\left(\text{Rev}\left(\frac{x}{h}\right)\right)\\
&=[x^n] G'\left(\text{Rev}\left(\frac{x}{h}\right)\right) \frac{d}{dx}\text{Rev}\left(\frac{x}{h}\right)\\
&=[x^n] \left(\frac{gx^k}{h}\right)(\phi) \phi'\\
&=[x^n] \frac{x \phi' g(\phi)}{\phi} \phi^k,\end{align*}
where we have used the fact that $h(\phi)=\frac{\phi}{x}$.
\end{proof}
\begin{corollary} We have the factorization
$$V=(g(\phi(x)),x) \cdot \left(\frac{x \phi'(x)}{\phi(x)}, \phi(x)\right),$$ where the factor
$\left(\frac{x \phi'(x)}{\phi(x)}, x\right)$ is an element of the hitting-time subgroup of the Riordan group.
\end{corollary}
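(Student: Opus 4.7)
The plan is to verify the factorization directly from the Riordan group multiplication rule, and then to read off the hitting-time membership from the definition recalled earlier in the paper.

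First I would compute the product on the right-hand side using $(g_1,f_1)\cdot(g_2,f_2) = (g_1\cdot g_2(f_1),\ f_2(f_1))$. Since the first factor is $(g(\phi(x)),x)$, its second component is the identity $f_1(x)=x$, so every composition with $f_1$ collapses. The product therefore reduces to
\[
\left(g(\phi(x))\cdot \frac{x\phi'(x)}{\phi(x)},\ \phi(x)\right) = \left(\frac{x\phi'(x)g(\phi(x))}{\phi(x)},\ \phi(x)\right),
\]
which is precisely the second form of $V$ given in the preceding lemma. This establishes the factorization.

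Next I would confirm that the second factor lies in the hitting-time subgroup. That subgroup, as recalled just after the binomial example, consists of arrays of the form $\left(\tfrac{xf'(x)}{f(x)},f(x)\right)$; specialising to $f=\phi$ shows the factor $\left(\tfrac{x\phi'(x)}{\phi(x)},\phi(x)\right)$ is of exactly this shape, so membership is immediate.

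There is no genuine obstacle: both assertions are tautological applications of the group law and of the hitting-time definition, combined with the identification of $V$ supplied by the lemma. The only conceptual content of the corollary is the recognition that $V$ splits canonically into a diagonal-free composition factor $(g(\phi(x)),x)$ and a hitting-time factor whose second component already encodes the reversion $\phi$, foreshadowing the later discussion of canonical factorizations in the associated subgroup.
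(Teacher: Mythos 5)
Your proof is correct and takes essentially the same route as the paper: both come down to the observation that left-multiplying by $(g(\phi(x)),x)$ just multiplies the first component by $g(\phi(x))$, applied to the second form of $V$ given in the Lemma, plus the immediate match with the definition of the hitting-time subgroup. The only difference is that the paper re-derives that second form from the first via the identity $f(\phi(x))=\phi(x)^2/x$ (which it reuses later), whereas you cite it directly from the Lemma statement; that is legitimate, since the Lemma's proof already establishes it.
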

\begin{proof}
We have $\phi(x)=\text{Rev}\left(\frac{x^2}{f(x)}\right)$, and so $f(x)=\frac{x^2}{\bar{\phi}(x)}$. Hence we have
$$f(\phi(x))=\frac{\phi(x)^2}{x},$$ and so
\begin{align*}\frac{\phi(x)\phi'(x) g(\phi(x))}{f(\phi(x))}&=\frac{\phi(x)\phi'(x)}{\frac{\phi^2}{x}} g(\phi(x))\\
&=\frac{x \phi'(x)}{\phi(x)}.g(\phi(x)).\end{align*}
Thus
$$V=\left(\frac{x \phi'(x)}{\phi(x)}.g(\phi(x)), \phi(x)\right)=(g(\phi(x)),x)\cdot \left(\frac{x \phi'(x)}{\phi(x)},\phi(x)\right).$$
\end{proof}
The \emph{horizontal half} $H$ of the array $M=(g(x), f(x))$ is the array whose matrix representation has general $(n,k)$-th term given by $T_{2n,n+k}$.
We then have the following result \cite{Central}.
\begin{lemma}Given a Riordan array $(g(x), f(x))$, its horizontal half $H$ is the Riordan array
$$\left(\frac{\phi(x)\phi'(x) g(\phi(x))}{f(\phi(x))}, f(\phi(x))\right),$$ where
$$\phi(x)=\text{Rev}\left(\frac{x^2}{f(x)}\right).$$
\end{lemma}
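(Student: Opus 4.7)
My plan is to mirror the proof of Lemma 2 (the vertical-half case), adjusting only the indexing. The starting point is to rewrite
\[
T_{2n,n+k}=[x^{2n}]g(x)(xh(x))^{n+k}=[x^{n+k}]g(x)\,x^{2k}\,h(x)^{n+k},
\]
which places $H_{n,k}$ in the exact shape $[x^N]A(x)h(x)^N$ required to apply Lagrange inversion for $\phi(x)=\mathrm{Rev}(x/h(x))=\mathrm{Rev}(x^2/f(x))$.

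The key steps are then: (i) multiply and divide by $n+k+1$ and interpret the expression via the Lagrange identity $\frac{1}{N}[x^{N-1}]G'(x)h(x)^N=[x^N]G(\phi(x))$ with $N=n+k+1$ and $G$ an antiderivative of $g(x)x^{2k}/h(x)$; (ii) convert back using $(n+k+1)[x^{n+k+1}]F(x)=[x^{n+k}]F'(x)$, yielding
\[
T_{2n,n+k}=[x^{n+k}]\frac{g(\phi)\phi^{2k}}{h(\phi)}\,\phi'(x);
\]
(iii) apply $h(\phi)=\phi/x$ and $f(\phi)=\phi^2/x$ (both immediate from $\phi=\mathrm{Rev}(x^2/f)$) to simplify the prefactor to $x\phi' g(\phi)/\phi=\phi\phi' g(\phi)/f(\phi)$ and to rewrite $\phi^{2k}=x^k f(\phi)^k$. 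Pulling the $x^k$ out of the coefficient extraction then produces exactly the general element $[x^n]\frac{\phi(x)\phi'(x)g(\phi(x))}{f(\phi(x))}f(\phi(x))^k$ of the claimed Riordan array.

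The main obstacle is pure bookkeeping: making sure the shifted exponent $n+k$ (rather than $n-k$, as in Lemma 2) passes through the Lagrange step cleanly and that the powers of $\phi$ and $x$ repackage as $f(\phi)^k$ times the right prefactor. As a consistency check---and an almost immediate alternative proof once Lemma 2 is granted---the formula for $H$ is equivalent to the Riordan-group identity $H=V\cdot(1,f(x))$, since $(G,\phi)\cdot(1,f)=(G,f(\phi))$ and Lemma 2 already identifies the shared first component $G=\phi\phi' g(\phi)/f(\phi)$; only the second component differs between the two halves, by composition with $f$.
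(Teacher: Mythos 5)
Your main computation is correct, and it reaches the stated formula by a genuinely different route from the paper's proof of this lemma. You transpose the vertical-half argument wholesale: rewrite $T_{2n,n+k}=[x^{n+k}]g(x)x^{2k}h(x)^{n+k}$ and apply Lagrange inversion once, with $N=n+k+1$ and $G'=gx^{2k}/h$, then use $h(\phi)=\phi/x$ and $\phi^{2k}=x^kf(\phi)^k$ to extract the entry $[x^n]\frac{\phi\phi' g(\phi)}{f(\phi)}f(\phi)^k$ directly; I checked the indexing and it goes through. The paper instead splits $[x^n]g(x)(xf(x))^k f(x)^n$ as a Cauchy convolution $\sum_i a_{i,k}[x^{n-i}]f(x)^n$ and applies Lagrange inversion only to the second factor, thereby exhibiting $H$ as the product of the Riordan array $\left(\frac{\phi'}{f(\phi)},\phi\right)$ with the original array $(g,f)$ --- a left-factorization that the paper reuses later ($H=\left(\frac{x\phi'}{\phi},\phi\right)\cdot(g,f)$). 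Your approach buys uniformity with the vertical-half lemma and a shorter single-entry computation; the paper's buys the factorization as a byproduct. One caveat: your closing ``alternative proof'' via $H=V\cdot(1,f)$ is, as you state it, only a consistency check --- knowing that $(G,\phi)\cdot(1,f)=(G,f(\phi))$ does not by itself show that $H$ equals this product. It can be upgraded to a genuine proof with one extra line, namely the convolution identity
$$T_{2n,n+k}=[x^{2n}]\bigl(g(x)f(x)^n\bigr)f(x)^k=\sum_j [x^{2n-j}]\bigl(g(x)f(x)^n\bigr)\cdot[x^j]f(x)^k=\sum_j T_{2n-j,n}\,[x^j]f(x)^k,$$
which establishes $H=V\cdot(1,f)$ at the level of matrix entries before any generating functions are known; combined with Lemma 2 this gives the result immediately.
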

For completeness, we reproduce the proof from \cite{Central}
\begin{proof}
The matrix $H$ is a Riordan array since we exhibit it as the product of two Riordan arrays. In order to show that it is the product of two Riordan arrays, we proceed as follows, using Lagrange inversion.
\begin{eqnarray*}
T_{2n,n+k}&=& [x^{2n}] g(x)(xf(x))^{n+k}\\
&=&[x^{2n}] x^{n+k}g(x)f(x)^k f(x)^n\\
&=&[x^n] g(x)(xf(x))^k f(x)^n\\
&=&\sum_{i=0}^n [x^i] g(x)(xf(x))^k [x^{n-i}]f(x)^n\\
&=& \sum_{i=0}^n a_{i,k} [x^n] \frac{x^i}{f(x)} f(x)^{n+1}\\
&=& \sum_{i=0}^n a_{i,k} (n+1) \frac{1}{n+1} [x^n] F'(x) f(x)^{n+1} \quad (F'(x)=\frac{x^i}{f(x)})\\
&=& \sum_{i=0}^n a_{i,k} (n+1) [x^{n+1}] F\left(\textrm{Rev}\left(\frac{x}{f(x)}\right)\right)\\
&=& \sum_{i=0}^n a_{i,k} [x^n] F'\left(\textrm{Rev}\left(\frac{x}{f(x)}\right)\right)\frac{d}{dx} \textrm{Rev}\left(\frac{x}{f(x)}\right)\\
&=& \sum_{i=0}^n a_{i,k} [x^n] \frac{\left(\textrm{Rev}\left(\frac{x}{f(x)}\right)\right)^i}{f\left(\textrm{Rev}\left(\frac{x}{f(x)}\right)\right)} \frac{d}{dx} \textrm{Rev}\left(\frac{x}{f(x)}\right)\\
&=& \sum_{i=0}^n T_{i,k} [x^n] \frac{\phi'(x)}{f(\phi(x))} (\phi(x))^i\\
&=& \sum_{i=0}^n m_{n,i} T_{i,k}, \end{eqnarray*}
where
$$m_{n,k}=[x^n] \frac{\phi'(x)}{f(\phi(x))}(\phi(x))^k$$ is the general term of the Riordan array
$$\left(\frac{\phi'}{f(\phi)}, \phi\right).$$
\end{proof}
\begin{corollary}
We have
$$H=(g(\phi(x)),x) \cdot \left(\frac{x \phi'(x)}{\phi(x)}, f(\phi(x))\right).$$
\end{corollary}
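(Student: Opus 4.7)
The plan is to run the same simplification used in the previous corollary, but this time on the $f$-part that matches $H$ rather than $V$. By the lemma we already know
$$H=\left(\frac{\phi(x)\phi'(x) g(\phi(x))}{f(\phi(x))},\, f(\phi(x))\right),$$
so the only real content of the corollary is to rewrite the first component in the asserted product form. First I would invoke the key identity from the vertical-half corollary: since $\phi(x)=\mathrm{Rev}(x^2/f(x))$, we have $f(x)=x^2/\bar{\phi}(x)$, and substituting $\phi(x)$ gives $f(\phi(x))=\phi(x)^2/x$. This identity is the whole arithmetic engine of both corollaries.

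Next I would plug this into the $g$-part of $H$:
$$\frac{\phi(x)\phi'(x) g(\phi(x))}{f(\phi(x))}=\frac{\phi(x)\phi'(x) g(\phi(x))}{\phi(x)^2/x}=\frac{x\phi'(x)}{\phi(x)}\cdot g(\phi(x)).$$
At this point the expression has separated cleanly into the product $g(\phi(x))\cdot\frac{x\phi'(x)}{\phi(x)}$, which is exactly what the Riordan multiplication law $(g_1,x)\cdot(g_2,f_2)=(g_1 g_2, f_2)$ produces when the first factor has $x$ as its second component.

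Finally I would invoke that multiplication law once to assemble the factorization
$$(g(\phi(x)),x)\cdot\left(\frac{x\phi'(x)}{\phi(x)},\, f(\phi(x))\right)=\left(g(\phi(x))\cdot\frac{x\phi'(x)}{\phi(x)},\, f(\phi(x))\right),$$
which matches the simplified form of $H$, finishing the proof.

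There is no genuine obstacle here: the argument is parallel to the corollary for $V$, the only difference being that the second coordinate is $f(\phi(x))$ rather than $\phi(x)$, and this coordinate is untouched by the computation. The proof is therefore essentially two lines of algebra plus one application of the Riordan group law.
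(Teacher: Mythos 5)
Your proof is correct and is essentially the argument the paper intends: the paper states this corollary without proof precisely because it follows from the horizontal-half lemma by the same two-line computation used for the vertical-half corollary, namely substituting $f(\phi(x))=\phi(x)^2/x$ into the first component and then applying the Riordan multiplication law with $(g(\phi(x)),x)$ as the left factor. Nothing is missing.
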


\begin{example}
We consider the Pascal-like Riordan array $\left(\frac{1}{1-x}, \frac{x(1+x)}{1-x}\right)$ which begins
$$\left(
\begin{array}{ccccccc}
 1 & 0 & 0 & 0 & 0 & 0 & 0 \\
 1 & 1 & 0 & 0 & 0 & 0 & 0 \\
 1 & 3 & 1 & 0 & 0 & 0 & 0 \\
 1 & 5 & 5 & 1 & 0 & 0 & 0 \\
 1 & 7 & 13 & 7 & 1 & 0 & 0 \\
 1 & 9 & 25 & 25 & 9 & 1 & 0 \\
 1 & 11 & 41 & 63 & 41 & 11 & 1 \\
\end{array}
\right).$$
The elements of the vertical half of this array are in bold in the following.
$$\left(
\begin{array}{ccccccc}
 \mathbf{1} & 0 & 0 & 0 & 0 & 0 & 0 \\
 1 & \mathbf{1} & 0 & 0 & 0 & 0 & 0 \\
 1 & \mathbf{3} & \mathbf{1} & 0 & 0 & 0 & 0 \\
 1 & 5 & \mathbf{5} & \mathbf{1} & 0 & 0 & 0 \\
 1 & 7 & \mathbf{13} & \mathbf{7} & 1 & 0 & 0 \\
 1 & 9 & 25 & \mathbf{25} & 9 & 1 & 0 \\
 1 & 11 & 41 & \mathbf{63} & 41 & 11 & 1 \\
\end{array}
\right).$$
The elements of the horizontal half of this array are in bold in the following.
$$A=\left(\begin{array}{ccccccc} \mathbf{1} & 0 & 0 & 0
&0 & 0 & \cdots \\1 & 1 & 0 & 0 & 0 & 0 & \cdots \\ 1 & \mathbf{3} & \mathbf{1} &
0 & 0 & 0 &
\cdots \\ 1 & 5 & 5 & 1 & 0 & 0 & \cdots \\ 1 & 7 & \mathbf{13} & \mathbf{7} &
\mathbf{1} & 0 & \cdots \\1 & 9  & 25 & 25 & 9 & 1 &\cdots\\ \vdots
& \vdots &
\vdots & \vdots & \vdots & \vdots &
\ddots\end{array}\right).$$
We have
$$\phi(x)=\text{Rev}\left(\frac{x^2}{f(x)}\right)=\text{Rev}\left(\frac{x(1-x)}{1+x}\right)=\frac{1-x-\sqrt{1-6x+x^2}}{2}.$$
Then
$$\frac{\phi(x)\phi'(x) g(\phi(x))}{f(\phi(x))}=\frac{1}{\sqrt{1-6x+x^2}}$$ and hence the vertical half array corresponding to $\left(\frac{1}{1-x}, \frac{x(1+x)}{1-x}\right)$ is given by the array
$$\left(\frac{1}{\sqrt{1-6x+x^2}}, \frac{1-x-\sqrt{1-6x+x^2}}{2}\right).$$
This matrix begins
$$V=\left(
\begin{array}{ccccccc}
 1 & 0 & 0 & 0 & 0 & 0 & 0 \\
 3 & 1 & 0 & 0 & 0 & 0 & 0 \\
 13 & 5 & 1 & 0 & 0 & 0 & 0 \\
 63 & 25 & 7 & 1 & 0 & 0 & 0 \\
 321 & 129 & 41 & 9 & 1 & 0 & 0 \\
 1683 & 681 & 231 & 61 & 11 & 1 & 0 \\
 8989 & 3653 & 1289 & 377 & 85 & 13 & 1 \\
\end{array}
\right).$$
Now
$$ f(\phi(x))=\frac{1-4x+x^2-(1-x)\sqrt{1-6x+x^2}}{2x},$$ and so the horizontal half of the array is given by
$$\left(\frac{1}{\sqrt{1-6x+x^2}}, \frac{1-4x+x^2-(1-x)\sqrt{1-6x+x^2}}{2x}\right).$$ This array begins
$$H=\left(
\begin{array}{ccccccc}
 1 & 0 & 0 & 0 & 0 & 0 & 0 \\
 3 & 1 & 0 & 0 & 0 & 0 & 0 \\
 13 & 7 & 1 & 0 & 0 & 0 & 0 \\
 63 & 41 & 11 & 1 & 0 & 0 & 0 \\
 321 & 231 & 85 & 15 & 1 & 0 & 0 \\
 1683 & 1289 & 575 & 145 & 19 & 1 & 0 \\
 8989 & 7183 & 3649 & 1159 & 221 & 23 & 1 \\
\end{array}
\right).$$
We calculate the product
$$V^{-1} \cdot H.$$
We get the matrix that begins
$$\left(
\begin{array}{ccccccc}
 1 & 0 & 0 & 0 & 0 & 0 & 0 \\
 0 & 1 & 0 & 0 & 0 & 0 & 0 \\
 0 & 2 & 1 & 0 & 0 & 0 & 0 \\
 0 & 2 & 4 & 1 & 0 & 0 & 0 \\
 0 & 2 & 8 & 6 & 1 & 0 & 0 \\
 0 & 2 & 12 & 18 & 8 & 1 & 0 \\
 0 & 2 & 16 & 38 & 32 & 10 & 1 \\
\end{array}
\right).$$
This is the array $(1, f(x))$. This is a general result.
\end{example}
\begin{proposition} Let $V$ and $H$ be respectively the vertical and horizontal halves of the Riordan array $(g(x), f(x))$. Then we have
$$V^{-1} \cdot H = (1, f(x)).$$
\end{proposition}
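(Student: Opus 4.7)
The plan is to read off the two lemmas and notice a striking structural feature, then do one line of group multiplication. Writing
$$G(x)=\frac{\phi(x)\phi'(x)g(\phi(x))}{f(\phi(x))},$$
the lemmas give $V=(G(x),\phi(x))$ and $H=(G(x),f(\phi(x)))$. The two halves share exactly the same first component, and the second components are related by postcomposing the inner substitution of $V$ with $f$. This strongly suggests that $H$ should factor as $V\cdot(1,f(x))$ on the right, which is precisely the statement we want.

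To verify this, I would apply the Riordan multiplication rule
$$(g_1(x),f_1(x))\cdot(g_2(x),f_2(x))=\bigl(g_1(x)\,g_2(f_1(x)),\,f_2(f_1(x))\bigr)$$
to $V\cdot(1,f(x))$. Since the $g_2$ component is the constant power series $1$, the first coordinate of the product is just $G(x)\cdot 1=G(x)$, matching the first coordinate of $H$. The second coordinate is $f(\phi(x))$, which is the second coordinate of $H$. Hence $H=V\cdot(1,f(x))$, and multiplying both sides on the left by $V^{-1}$ yields $V^{-1}\cdot H=(1,f(x))$, as claimed.

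Equivalently, one could argue through inverses: $V^{-1}=(1/G(\bar\phi(x)),\bar\phi(x))$ and then compute
$$V^{-1}\cdot H=\left(\frac{1}{G(\bar\phi(x))}\cdot G(\bar\phi(x)),\,f(\phi(\bar\phi(x)))\right)=(1,f(x)),$$
using $\phi(\bar\phi(x))=x$. The only obstacle worth flagging is making sure the direction of composition in the Riordan group law is respected (the inner function of the left factor is substituted into the components of the right factor, not the other way around); once that is in place the computation is immediate. No use of the explicit definition $\phi(x)=\mathrm{Rev}(x^2/f(x))$ is required beyond what is already packaged into the statements of the two half-formulas.
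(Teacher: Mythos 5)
Your proof is correct and is essentially the paper's own argument: the paper's entire proof is the one-line identity $\left(\frac{\phi\phi' g(\phi)}{f(\phi)}, f(\phi)\right)=\left(\frac{\phi\phi' g(\phi)}{f(\phi)}, \phi\right)\cdot (1,f)$, i.e.\ $H=V\cdot(1,f)$, which is exactly your main computation. Your extra check via $V^{-1}=(1/G(\bar\phi),\bar\phi)$ is a harmless reformulation of the same fact.
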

\begin{proof} We have
$$\left(\frac{\phi(x)\phi'(x) g(\phi(x))}{f(\phi(x))}, f(\phi(x))\right)=\left(\frac{\phi(x)\phi'(x) g(\phi(x))}{f(\phi(x))}, \phi(x)\right)\cdot (1, f(x)).$$
\end{proof}
In general, we have
$$V=(g(\phi), x)\cdot \left(\frac{x \phi'}{\phi}, \phi\right),$$
$$H=(g(\phi), x)\cdot \left(\frac{x \phi'}{\phi}, f(\phi)\right)=(g(\phi), x)\cdot \left(\frac{x \phi'}{\phi}, \phi\right)\cdot (1, f).$$
Thus
$$H = V \cdot (1,f).$$

We end this section with a generic factorization of elements of the associated or Lagrange subgroup of the Riordan group.
\begin{proposition}
Let $A=(1, f(x))$ be an element of the associated group. Let $H$ and $V$ respectively be the horizontal half and the vertical half of $A$. The array $V$ is an element of the hitting time subgroup, and thus so is $V^{-1}$. We then have
$$ V \cdot A = H,$$ or equivalently,
$$ A = V^{-1} \cdot H.$$
\end{proposition}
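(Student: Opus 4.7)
The proof splits naturally into two parts, and both are essentially immediate consequences of the machinery already developed.

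First I would establish that $V$ is an element of the hitting time subgroup. Using the description of $V$ given in the first lemma of the section, together with the factorization recorded in its corollary, we have
$$V=(g(\phi(x)),x) \cdot \left(\frac{x \phi'(x)}{\phi(x)}, \phi(x)\right).$$
For $A=(1, f(x))$, the first component is $g(x)=1$, so the left factor collapses to the identity $(1,x)$ and
$$V=\left(\frac{x \phi'(x)}{\phi(x)}, \phi(x)\right).$$
This is exactly the shape $\bigl(xF'(x)/F(x), F(x)\bigr)$ defining the hitting time subgroup, with $F=\phi$, so $V$ lies in that subgroup. Since the hitting time subgroup is a subgroup of the Riordan group, $V^{-1}$ belongs to it as well.

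For the second assertion, I would simply invoke the identity $H=V\cdot(1,f)$ already proved at the end of the preceding discussion (or equivalently the preceding proposition $V^{-1}\cdot H=(1,f)$). Substituting $A=(1,f)$ gives $V\cdot A=H$, and left-multiplying by $V^{-1}$ gives $A=V^{-1}\cdot H$.

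The main conceptual point, and the only thing to verify beyond quotation of earlier results, is that setting $g=1$ in the vertical-half formula removes the $g(\phi)$ prefactor and exposes the hitting time shape. There is no real obstacle here: once the factorization $V=(g(\phi),x)\cdot(x\phi'/\phi,\phi)$ is in hand, specializing to $g=1$ is mechanical, and the identity $H=V\cdot(1,f)$ has already been obtained.
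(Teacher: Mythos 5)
Your proposal is correct and follows essentially the same route as the paper: the paper's proof likewise observes that $g=1$ collapses the factor $(g(\phi),x)$ to the identity, leaving $V=\left(\frac{x\phi'}{\phi},\phi\right)$ in hitting-time form, and then appeals to the previously established identity $H=V\cdot(1,f)$. The paper adds only an optional direct verification of the product $\left(\frac{x\phi'}{\phi},\phi\right)\cdot\left(1,\frac{x^2}{\bar{\phi}}\right)=\left(\frac{x\phi'}{\phi},\frac{\phi^2}{x}\right)$, which your argument does not need.
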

\begin{proof}
For $A=(1,f)$, we have $g(x)=1$, and so $(g(\phi),x)=(1,x)$ the identity. We then apply the above result.

We may also verify the factorization directly as follows. Since $\phi=\text{Rev}\left(\frac{x^2}{f(x)}\right)$, we have that $f(x)=\frac{x^2}{\bar{\phi}(x)}$ and so $f(\phi(x))=\frac{\phi(x)^2}{x}$.  Then the result corresponds to the following product of Riordan arrays.
$$\left(\frac{x\phi'(x)}{\phi(x)}, \phi(x)\right)\cdot \left(1, \frac{x^2}{\bar{\phi}(x)}\right)=\left(\frac{x\phi'(x)}{\phi(x)}, \frac{\phi^2}{x}\right),$$ which is easily verified.
\end{proof}

\begin{example}
We seek the ``hitting-time'' factorization of the Catalan matrix given by $(1, xc(x))$ where $c(x)=\frac{1-\sqrt{1-4x}}{2x}$ is the generating function of the Catalan numbers $\frac{1}{n+1} \binom{2n}{n}$.

This array begins
$$\left(
\begin{array}{ccccccc}
 1 & 0 & 0 & 0 & 0 & 0 & 0 \\
 0 & 1 & 0 & 0 & 0 & 0 & 0 \\
 0 & 1 & 1 & 0 & 0 & 0 & 0 \\
 0 & 2 & 2 & 1 & 0 & 0 & 0 \\
 0 & 5 & 5 & 3 & 1 & 0 & 0 \\
 0 & 14 & 14 & 9 & 4 & 1 & 0 \\
 0 & 42 & 42 & 28 & 14 & 5 & 1 \\
\end{array}
\right).$$
We have
$$f(x)=xc(x) \Longrightarrow \phi(x)=\frac{2 \sqrt{x} \sin \left(\frac{1}{3} \sin ^{-1}\left(\frac{3 \sqrt{3}
   \sqrt{x}}{2}\right)\right)}{\sqrt{3}}.$$
We then find that
$$f(\phi(x))=\frac{1}{2}-\frac{\sqrt{\sqrt{3}-8 \sqrt{x} \sin \left(\frac{1}{3} \sin ^{-1}\left(\frac{3 \sqrt{3}
   \sqrt{x}}{2}\right)\right)}}{2 \sqrt[4]{3}},$$ and
$$\frac{x \phi'}{\phi}=\frac{\sqrt{3} \sqrt{x} \cot \left(\frac{1}{3} \sin ^{-1}\left(\frac{3 \sqrt{3}
   \sqrt{x}}{2}\right)\right)}{2 \sqrt{4-27 x}}+\frac{1}{2}.$$
The vertical half $V$ of $(1, xc(x))$ then begins
$$\left(
\begin{array}{ccccccc}
 1 & 0 & 0 & 0 & 0 & 0 & 0 \\
 1 & 1 & 0 & 0 & 0 & 0 & 0 \\
 5 & 2 & 1 & 0 & 0 & 0 & 0 \\
 28 & 9 & 3 & 1 & 0 & 0 & 0 \\
 165 & 48 & 14 & 4 & 1 & 0 & 0 \\
 1001 & 275 & 75 & 20 & 5 & 1 & 0 \\
 6188 & 1638 & 429 & 110 & 27 & 6 & 1 \\
\end{array}
\right).$$ This is
$$\left(\frac{\sqrt{3} \sqrt{x} \cot \left(\frac{1}{3} \sin ^{-1}\left(\frac{3 \sqrt{3}
   \sqrt{x}}{2}\right)\right)}{2 \sqrt{4-27 x}}+\frac{1}{2}, \frac{2 \sqrt{x} \sin \left(\frac{1}{3} \sin ^{-1}\left(\frac{3 \sqrt{3}
   \sqrt{x}}{2}\right)\right)}{\sqrt{3}}\right).$$
The horizontal half $H$ of $(1, xc(x))$ begins
$$\left(
\begin{array}{ccccccc}
 1 & 0 & 0 & 0 & 0 & 0 & 0 \\
 1 & 1 & 0 & 0 & 0 & 0 & 0 \\
 5 & 3 & 1 & 0 & 0 & 0 & 0 \\
 28 & 14 & 5 & 1 & 0 & 0 & 0 \\
 165 & 75 & 27 & 7 & 1 & 0 & 0 \\
 1001 & 429 & 154 & 44 & 9 & 1 & 0 \\
 6188 & 2548 & 910 & 273 & 65 & 11 & 1 \\
\end{array}
\right).$$ This is
$$\left(\frac{\sqrt{3} \sqrt{x} \cot \left(\frac{1}{3} \sin ^{-1}\left(\frac{3 \sqrt{3}
   \sqrt{x}}{2}\right)\right)}{2 \sqrt{4-27 x}}+\frac{1}{2},\frac{1}{2}-\frac{\sqrt{\sqrt{3}-8 \sqrt{x} \sin \left(\frac{1}{3} \sin ^{-1}\left(\frac{3 \sqrt{3}
   \sqrt{x}}{2}\right)\right)}}{2 \sqrt[4]{3}}\right).$$
\end{example}

We note that we can express the inverse $V^{-1}$ of the vertical half $V$ in terms of $f(x)$.
\begin{proposition} We have
$$ V^{-1}=\left(2-\frac{xf'(x)}{f(x)}, \frac{x^2}{f(x)}\right).$$
\end{proposition}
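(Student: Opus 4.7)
The plan is to unpack the explicit form of $V$ from the earlier Lemma in the case $g=1$ (which is the relevant context, since this statement follows the discussion of $A=(1,f)$ in the Lagrange subgroup), and then apply the Riordan inversion formula, with a single logarithmic-differentiation step at the end.

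First, I would specialise the Lemma to $g=1$, which gives
$$ V=\left(\frac{x\phi'(x)}{\phi(x)},\phi(x)\right), \qquad \phi(x)=\text{Rev}\left(\frac{x^2}{f(x)}\right), $$
so that $\bar{\phi}(x)=x^2/f(x)$ by the very definition of $\phi$. This immediately identifies the second component of $V^{-1}$ as $x^2/f(x)$, matching the right-hand side of the proposition.

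Next, I apply the Riordan inverse formula $V^{-1}=\left(1/A(\bar\phi(x)),\,\bar\phi(x)\right)$ with $A(x)=x\phi'(x)/\phi(x)$. The key simplification is that differentiating $\phi(\bar\phi(x))=x$ yields $\phi'(\bar\phi(x))=1/\bar\phi'(x)$, and $\phi(\bar\phi(x))=x$ itself clears the denominator. Hence
$$ A(\bar\phi(x))=\frac{\bar\phi(x)\,\phi'(\bar\phi(x))}{\phi(\bar\phi(x))}=\frac{\bar\phi(x)}{x\,\bar\phi'(x)}, $$
so the first component of $V^{-1}$ is $x\bar\phi'(x)/\bar\phi(x)$.

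Finally, substituting $\bar\phi(x)=x^2/f(x)$ and using logarithmic differentiation,
$$ \frac{\bar\phi'(x)}{\bar\phi(x)}=\frac{d}{dx}\log\frac{x^2}{f(x)}=\frac{2}{x}-\frac{f'(x)}{f(x)}, $$
so that $x\bar\phi'(x)/\bar\phi(x)=2-xf'(x)/f(x)$, which is exactly the claimed first component. There is no real obstacle here; the only thing to be careful about is the implicit assumption $g=1$ (inherited from the preceding proposition on Lagrange elements), since otherwise a factor involving $g$ would persist.
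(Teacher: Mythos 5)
Your proof is correct and follows essentially the same route as the paper: identify $\bar{\phi}(x)=x^2/f(x)$ and compute $x\bar{\phi}'(x)/\bar{\phi}(x)$ by logarithmic differentiation. The only cosmetic difference is that the paper simply invokes the closure of the hitting-time subgroup under inversion to write $V^{-1}=\left(\frac{x\bar{\phi}'(x)}{\bar{\phi}(x)},\bar{\phi}(x)\right)$ directly, whereas you rederive that fact from the general Riordan inverse formula --- a harmless (and arguably more self-contained) elaboration.
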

\begin{proof}
We have $$V=\left(\frac{x \phi'(x)}{\phi(x)}, \phi(x)\right),$$ where
$$\phi(x)=\text{Rev}\left(\frac{x^2}{f(x)}\right).$$
Now since $V$ is in the hitting-time subgroup, its inverse will be given by
$$V^{-1}=\left(\frac{x \bar{\phi}'(x)}{\bar{\phi}(x)}, \bar{\phi}(x)\right).$$
Here, we have
$$\bar{\phi}(x)=\frac{x^2}{f(x)}.$$
We find that
$$\frac{x \bar{\phi}'(x)}{\bar{\phi}(x)}=\frac{2f(x)-xf'(x)}{f(x)},$$ and the result follows.
\end{proof}

We close this section by asking the question: what condition on $f(x)$ guarantees that the vertical half $V$ be a pseudo-involution? The following provides an answer.

\begin{proposition} The vertical half $V$ of the array $(1, f(x))$ will be a pseudo-involution whenever we have
$$\frac{x^2}{f(x)}=\text{Rev}\left(\frac{x^2}{-f(-x)}\right).$$
\end{proposition}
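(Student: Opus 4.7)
The plan is to verify directly the two defining identities of a pseudo-involution $V = (g_V, f_V)$, namely $f_V(-f_V(x)) = -x$ and $g_V(x)\,g_V(-f_V(x)) = 1$, in the case where $g_V(x) = x\phi'(x)/\phi(x)$ and $f_V(x) = \phi(x)$.

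My first step would be to set $F(x) = x^2/f(x)$, so that $\phi = \text{Rev}(F)$, and to note the identity $x^2/(-f(-x)) = -F(-x)$. The hypothesis then reads $F(-F(x)) = -x$. This immediately identifies the compositional inverse of $F$: since $y \mapsto -F(-y)$ also inverts $F$, by uniqueness we obtain the clean formula $\phi(x) = -F(-x)$. This is the key structural fact powering the rest of the proof.

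Given $\phi(x) = -F(-x)$, the first pseudo-involution condition is essentially immediate: $\phi(-\phi(x)) = -F(\phi(x)) = -x$. For the multiplicative condition, a short computation reduces $g_V(x)\,g_V(-\phi(x))$ to $\phi'(x)\,\phi'(-\phi(x))$, after the factor $\phi(-\phi(x)) = -x$ in the denominator cancels. Using $\phi'(x) = F'(-x)$, this becomes $F'(-x)\,F'(-F(-x))$, which equals $1$ by differentiating the identity $F(-F(y)) = -y$ (the two minus signs produced by the chain rule conveniently cancel) and then substituting $y = -x$.

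I expect the main obstacle to be sign bookkeeping: the hypothesis is an odd-composition relation, and the verification involves repeatedly swapping $x \leftrightarrow -x$ inside compositions and their derivatives. Once the substitution $F(x) = x^2/f(x)$ is adopted and the explicit formula $\phi(x) = -F(-x)$ extracted, however, the remainder reduces to routine chain-rule manipulations, so there is no genuine analytic difficulty beyond that.
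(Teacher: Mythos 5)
Your proof is correct and follows the same essential route as the paper: both reduce the pseudo-involution property of $V=\left(\frac{x\phi'(x)}{\phi(x)},\phi(x)\right)$ to the condition $\bar{\phi}(x)=-\phi(-x)$ (your identity $\phi(x)=-F(-x)$ with $F=\bar{\phi}=x^2/f$ is precisely this condition), which becomes the stated hypothesis once the reversion is unwound. The only difference is that the paper simply invokes this as the known criterion for a hitting-time element to be a pseudo-involution, whereas you verify from first principles that the condition on $\phi$ alone also forces the multiplicative condition $g_V(x)\,g_V(-\phi(x))=1$ on the first component --- a step the paper leaves implicit.
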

\begin{proof} In order that $V=\left(\frac{x \phi'(x)}{\phi(x)}, \phi(x)\right)$ be a pseudo-involution, we require that
$$\bar{\phi}(x)=-\phi(-x).$$
But since $\bar{\phi}(x)=\frac{x^2}{f(x)}$, this means that we require
$$\frac{x^2}{f(x)}=-\text{Rev}\left(\frac{x^2}{f(-x)}\right)=\text{Rev}\left(\frac{x^2}{-f(-x)}\right).$$
\end{proof}
\begin{example}
We take $f(x)=x(1+x)$. Then $(1, f(x))$ is the matrix $\left(\binom{n-1}{n-k}\right)$. Now
$$\phi(x)=\text{Rev}\left(\frac{x^2}{x(1+x)}\right)=\frac{x}{1-x}.$$
This means that $$V=\left(\frac{1}{1-x}, \frac{x}{1-x}\right).$$
This is the binomial matrix $\left(\binom{n}{k}\right)$, which is indeed a pseudo-involution.
\end{example}
We close this section by noting the importance of the hitting-time group element $\left(\frac{x \phi'}{\phi}, \phi\right)$.
$$ H = (g(\phi),x)\cdot \left(\frac{x \phi'}{\phi}, \frac{\phi^2}{x}\right)=\left(\frac{x \phi'}{\phi}, \phi\right)\cdot (g, f).$$
$$V=(g(\phi), x) \cdot \left(\frac{x \phi'}{\phi}, \phi\right).$$ 
We see that in general we have 
$$ V \cdot (g, f)= (g(\phi(x)),x) \cdot H.$$
\section{Riordan antecedents of a Riordan array half}
We now consider the question: given a Riordan array $(\psi(x), \phi(x))$, can we express it as the half (either vertical or horizontal) of another Riordan array $(g(x), f(x))$? If such a Riordan array $(g(x), f(x))$ exists, we shall call it a \emph{Riordan antecedent} of  the given matrix $(\psi(x), \phi(x))$. Note that if we drop the stipulation of being Riordan, then it is possible to define an arbitrary number of lower-triangular invertible antecedents.
\begin{example} Consider the array $R=(\psi(x), \phi(x))=\left(\frac{1}{1-x}, \frac{x}{1-x}\right)$ (the binomial array). As we have seen, its general element is $\binom{n}{k}$. We  claim that the matrix with general term $M=\binom{k}{2k-n}$ is a vertical antecedent  of $A$, that is, we have $M_V=A$. This follows since if $T_{n,k}=\binom{k}{2k-n}$, then $T_{2n-k,n}=\binom{n}{2n-(2n-k)}=\binom{n}{k}$.
The matrix $(\binom{k}{2k-n})=(\binom{k}{n-k})$ begins
$$\left(
\begin{array}{cccccccc}
 1 & 0 & 0 & 0 & 0 & 0 & 0 & 0 \\
 0 & 1 & 0 & 0 & 0 & 0 & 0 & 0 \\
 0 & 1 & 1 & 0 & 0 & 0 & 0 & 0 \\
 0 & 0 & 2 & 1 & 0 & 0 & 0 & 0 \\
 0 & 0 & 1 & 3 & 1 & 0 & 0 & 0 \\
 0 & 0 & 0 & 3 & 4 & 1 & 0 & 0 \\
 0 & 0 & 0 & 1 & 6 & 5 & 1 & 0 \\
 0 & 0 & 0 & 0 & 4 & 10 & 6 & 1 \\
\end{array}
\right).$$
This is in fact the Riordan array $(1, x(1+x))$ so in this case we see that the binomial matrix has a vertical Riordan antecedent.
\end{example}

\begin{example} For this example, we again take the case of the binomial matrix $\left(\binom{n}{k}\right)$.
We now consider the matrix with general term $\binom{\frac{n}{2}}{k-\frac{n}{2}}$. This matrix begins
$$\left(
\begin{array}{cccccccc}
 1 & 0 & 0 & 0 & 0 & 0 & 0 & 0 \\
 \frac{1}{2} & 1 & 0 & 0 & 0 & 0 & 0 & 0 \\
 0 & 1 & 1 & 0 & 0 & 0 & 0 & 0 \\
 -\frac{1}{16} & \frac{3}{8} & \frac{3}{2} & 1 & 0 & 0 & 0 & 0 \\
 0 & 0 & 1 & 2 & 1 & 0 & 0 & 0 \\
 \frac{3}{256} & -\frac{5}{128} & \frac{5}{16} & \frac{15}{8} & \frac{5}{2} & 1 & 0 & 0 \\
 0 & 0 & 0 & 1 & 3 & 3 & 1 & 0 \\
 -\frac{5}{2048} & \frac{7}{1024} & -\frac{7}{256} & \frac{35}{128} & \frac{35}{16} & \frac{35}{8} & \frac{7}{2} &
   1 \\
\end{array}
\right).$$
Letting $T_{n,k}=\binom{\frac{n}{2}}{k-\frac{n}{2}}$, we can calculate that $T_{2n,n+k}=\binom{n}{k}$. Thus the matrix $(\binom{\frac{n}{2}}{k-\frac{n}{2}})$ is a horizontal antecedent of the binomial matrix.
\end{example}

We have the following general results.
\begin{proposition} Every Riordan array $(\Psi(x), \Phi(x))$ has a vertical Riordan antecedent.
\end{proposition}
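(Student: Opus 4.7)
The plan is to construct the desired antecedent $(g(x),f(x))$ explicitly by inverting the formula for the vertical half. Setting $V[(g,f)]=(\Psi(x),\Phi(x))$ and using
$$V=\left(\frac{x\phi'(x)g(\phi(x))}{\phi(x)},\phi(x)\right),\qquad \phi(x)=\text{Rev}\!\left(\frac{x^{2}}{f(x)}\right),$$
I would first match the second components. This forces $\phi(x)=\Phi(x)$, and inverting the defining equation for $\phi$ gives
$$f(x)=\frac{x^{2}}{\bar{\Phi}(x)},$$
where $\bar{\Phi}$ is the compositional inverse of $\Phi$.

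Matching first components next, the relation $\frac{x\Phi'(x)g(\Phi(x))}{\Phi(x)}=\Psi(x)$ determines the composition $g\circ\Phi$, and substituting $x\mapsto\bar{\Phi}(x)$ yields the explicit formula
$$g(x)=\frac{x\,\Psi(\bar{\Phi}(x))}{\bar{\Phi}(x)\,\Phi'(\bar{\Phi}(x))}.$$

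The remaining task is to verify that this candidate pair lies in the Riordan group. Since $\Phi(0)=0$ and $\Phi'(0)\ne 0$, the series $\bar{\Phi}(x)=x/\Phi'(0)+O(x^{2})$ exists as a formal power series, giving $f(x)=\Phi'(0)\,x+O(x^{2})$, so $f_{0}=0$ and $f_{1}\ne 0$. For $g$, the factor $x$ in the numerator cancels the simple zero of $\bar{\Phi}(x)$ in the denominator, and a short computation shows $g(0)=\Psi(0)\ne 0$, so $g$ is an invertible power series. With these initial conditions confirmed, applying the earlier Lemma in the forward direction to $(g,f)$ immediately returns $(\Psi,\Phi)$ as its vertical half, exhibiting the required antecedent. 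I do not expect a substantive obstacle: the construction is essentially a two-step inversion of a bijective algebraic map on power-series data, and the only genuine care required is in confirming the Riordan-group constraints on $f_{0}$, $f_{1}$, and $g_{0}$.
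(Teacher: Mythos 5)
Your proposal is correct and follows essentially the same route as the paper's own proof: equate the second components to force $\phi=\Phi$ and hence $f(x)=x^{2}/\bar{\Phi}(x)$, then solve the first-component equation for $g$ by composing with $\bar{\Phi}$. Your explicit formula $g(x)=\frac{x\,\Psi(\bar{\Phi}(x))}{\bar{\Phi}(x)\,\Phi'(\bar{\Phi}(x))}$ and the check that $f_{1}\ne 0$ and $g(0)=\Psi(0)\ne 0$ are welcome additions the paper leaves implicit (the formula in fact reappears later in the paper as a separate proposition), but they do not change the underlying argument.
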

\begin{proof} If such an antecedent exists, then there would exist a Riordan array $(g(x), f(x))$ with vertical half given by
$$\left(\frac{\phi(x)\phi'(x) g(\phi(x))}{f(\phi(x))}, \phi(x)\right),$$ where
$$\phi(x)=\text{Rev}\left(\frac{x^2}{f(x)}\right).$$ We would then have
$$\Phi(x)=\phi(x)$$ and
$$\frac{\phi(x)\phi'(x) g(\phi(x))}{f(\phi(x))}=\Psi(x).$$
Thus we must show that we can solve these last two equations to obtain $g(x)$ and $f(x)$.
Using the presumed equality of $\Phi(x)$ and $\phi(x)$, we set
$$\Phi(x)=\text{Rev}\left(\frac{x^2}{f(x)}\right),$$ or
$$ f(x) = \frac{x^2}{\text{Rev}(\Phi(x))}.$$
We now solve the equation
$$\frac{\Phi'\cdot \Phi\cdot u}{f(\Phi)}=\Psi$$ to obtain
$u=u(x)$.
Then setting $g(x)=u(\text{Rev}(\Phi(x)))$ gives us the second element of the pair $(g(x), f(x))$.
\end{proof}
\begin{example}
We consider the Riordan array $(\psi, \phi)=\left(\frac{1}{1-x}, \frac{x(1+x)}{1-x}\right)$.
We have
$$\phi(x)=\frac{x(1+x)}{1-x} \Longrightarrow \bar{\phi}(x)=\frac{\sqrt{1+6x+x^2}-x-1}{2}.$$
Thus we have
$$f(x)=\frac{x^2}{\bar{\phi}(x)}=\frac{x(1+x+\sqrt{1+6x+x^2})}{2}.$$
We calculate $f(\phi(x))=\frac{x(1+x)^2}{(1-x)^2}$. We must now solve for $u$ in the equation
$$\frac{\phi'\cdot \phi\cdot u}{f(\phi)}=\psi=\frac{1}{1-x}.$$
We obtain $$u(x)=\frac{1+x}{1+2x-x^2}.$$
Finally we have
$$g(x)=u(\bar{\phi}(x))=\frac{1+x+\sqrt{1+6x+x^2}}{2 \sqrt{1+6x+x^2}}.$$
The matrix $$\left(\frac{1+x+\sqrt{1+6x+x^2}}{2 \sqrt{1+6x+x^2}}, \frac{x(1+x+\sqrt{1+6x+x^2})}{2}\right)$$ is thus a vertical Riordan antecedent of the Pascal-like Riordan array $\left(\frac{1}{1-x}, \frac{x(1+x)}{1-x}\right)$. This antecedent matrix begins
$$\left(
\begin{array}{cccccccc}
 1 & 0 & 0 & 0 & 0 & 0 & 0 & 0 \\
 -1 & 1 & 0 & 0 & 0 & 0 & 0 & 0 \\
 5 & 1 & 1 & 0 & 0 & 0 & 0 & 0 \\
 -25 & 1 & 3 & 1 & 0 & 0 & 0 & 0 \\
 129 & -7 & 1 & 5 & 1 & 0 & 0 & 0 \\
 -681 & 41 & -1 & 5 & 7 & 1 & 0 & 0 \\
 3653 & -231 & 9 & 1 & 13 & 9 & 1 & 0 \\
 -19825 & 1289 & -61 & 1 & 7 & 25 & 11 & 1 \\
\end{array}
\right).$$
\end{example}
We also have the following result.
\begin{proposition} Assume given a Riordan array $(\Psi(x), \Gamma(x))$. If there exists a solution $f(x)$ of the implicit equation
$$\Gamma\left(\frac{x^2}{f(x)}\right)=f(x)$$ with $f(0)=0$, then we can construct a horizontal Riordan antecedent of $(\Psi(x), \Gamma(x))$.
\end{proposition}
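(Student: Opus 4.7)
My plan is to mimic the construction used for the vertical antecedent. Given $(\Psi(x), \Gamma(x))$, we want to produce a Riordan array $(g(x), f(x))$ whose horizontal half, as given by the earlier lemma, equals $(\Psi, \Gamma)$. Recall that the horizontal half is
\[
\left(\frac{\phi(x)\phi'(x) g(\phi(x))}{f(\phi(x))},\; f(\phi(x))\right), \qquad \phi(x)=\text{Rev}\!\left(\frac{x^2}{f(x)}\right),
\]
so I must match the two components of the pair. The hypothesis hands us the series $f(x)$ with $f(0)=0$ satisfying $\Gamma(x^2/f(x))=f(x)$, so the starting move is to define $\phi(x):=\text{Rev}(x^2/f(x))$ and then set $g(x):=u(\bar{\phi}(x))$ for an appropriate $u$ to be determined. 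I expect this to work provided we verify an order condition on $x^2/f(x)$ that lets us take its compositional inverse.

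First I would handle the second component. By definition, $\bar\phi(x) = x^2/f(x)$, so the implicit equation reads $\Gamma(\bar\phi(x)) = f(x)$. Substituting $x\mapsto \phi(y)$ on both sides and using $\bar\phi(\phi(y))=y$ yields immediately $\Gamma(y) = f(\phi(y))$, i.e.\ $f(\phi(x))=\Gamma(x)$. This is the whole point of the implicit equation, and it guarantees the second component matches. As a byproduct, $\phi(x)^2 = x\,f(\phi(x)) = x\,\Gamma(x)$, so $\phi$ has order $1$ as soon as $\Gamma$ has order $1$ (and a square root of the leading coefficient of $\Gamma$ exists in the underlying ring, which is what lets $f$ exist in the first place).

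Next I would solve for the first component. Setting the required equation
\[
\frac{\phi(x)\phi'(x)\, g(\phi(x))}{f(\phi(x))} = \Psi(x)
\]
and substituting $f(\phi(x))=\Gamma(x)$ gives
\[
g(\phi(x)) = \frac{\Psi(x)\,\Gamma(x)}{\phi(x)\,\phi'(x)}=: u(x).
\]
Then $g(x):=u(\bar\phi(x))$ satisfies this identity. A short sanity check on orders shows $u(x)$ is a bona fide power series with nonzero constant term: the numerator $\Psi(x)\Gamma(x)$ has order $1$ with leading coefficient $\Psi(0)\gamma_1$, while $\phi(x)\phi'(x)$ has order $1$ with leading coefficient $\phi_1^{\,2}$, and $\bar\phi(0)=0$ so $g(0)=u(0)\neq 0$. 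Hence $(g,f)$ is a genuine Riordan array.

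Assembling the two components, the horizontal half of $(g(x), f(x))$ constructed in this way equals $(\Psi(x), \Gamma(x))$, so $(g,f)$ is a horizontal Riordan antecedent. The only real obstacle in the argument is the very first step, namely the existence of $f$ with $f(0)=0$ satisfying $\Gamma(x^2/f(x))=f(x)$, which is exactly what the hypothesis gives us; everything downstream is a routine manipulation of the reversion identity $\bar\phi(\phi(x))=x$ together with the horizontal half formula.
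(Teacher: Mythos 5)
Your proposal is correct and follows essentially the same route as the paper: use the implicit equation to get $f(\phi(x))=\Gamma(x)$, set $\phi=\text{Rev}\left(x^{2}/f(x)\right)$, and recover $g$ by solving $\phi\phi'\,g(\phi)/f(\phi)=\Psi$ and composing with $\bar{\phi}$. The only difference is one of presentation -- the paper derives the implicit equation as a necessary condition and then reverses the steps, while you run the construction forward and add the (welcome but routine) checks that $\phi$ has order one and that $u(0)\neq 0$.
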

\begin{proof} If such an antecedent $(g(x), f(x))$ existed, then we should have
$$(\Psi(x), \Gamma(x))=\left(\frac{\phi(x)\phi'(x) g(\phi(x))}{f(\phi(x))}, f(\phi(x))\right),$$ where
$$\phi(x)=\text{Rev}\left(\frac{x^2}{f(x)}\right).$$
This last equation gives us
$$\bar{\phi}(x)=\frac{x^2}{f(x)}.$$
Now $\Gamma(x)=f(\phi(x))$ gives us that
$$ \Gamma(\bar{\phi}(x))=f(\phi(\bar{\phi}(x)))=f(x).$$
This gives us an implicit equation for $f$.
$$ \Gamma\left(\frac{x^2}{f(x)}\right)=f(x).$$
Let $f(x)$ be the solution of this equation for which $f(0)=0$.
We can now solve for the corresponding $\phi$ since $\phi(x)=\text{Rev}\left(\frac{x^2}{f(x)}\right)$.
As before, we can now solve for $g(x)$.
\end{proof}

\begin{example}
We consider the case of the binomial matrix $\left(\frac{1}{1-x}, \frac{x}{1-x}\right)$.
Thus $\Gamma(x)=\frac{x}{1-x}$, and the equation
 $$\Gamma\left(\frac{x^2}{f(x)}\right)=f(x)$$ becomes the equation
 $$\frac{x^2}{f-x^2}=f,$$ with solution
 $$f(x)=\frac{x^2+x \sqrt{x^2+4}}{2}.$$
This gives us
$$\bar{\phi}(x)=\frac{x^2}{f(x)}=\frac{x(\sqrt{x^2+4}-x)}{2}.$$
Solving for $\phi(x)$, we get
$$\phi(x)=\frac{x}{\sqrt{1-x}}.$$
Solving the equation
$$ \frac{\phi'\cdot \phi \cdot u}{f(\phi)}=\frac{1}{1-x},$$ we obtain
$u(x)=\frac{2}{2-x}$. Then we have
$$g(x)=u(\bar{\phi}(x))=\frac{x+\sqrt{x^2+4}}{\sqrt{x^2+4}}.$$
Thus the sought-after horizontal Riordan antecedent of the binomial matrix is given by the Riordan array
$$\left(\frac{x+\sqrt{x^2+4}}{\sqrt{x^2+4}}, \frac{x^2+x \sqrt{x^2+4}}{2}\right).$$
This matrix begins
$$\left(
\begin{array}{ccccccccccc}
 1 & 0 & 0 & 0 & 0 & 0 & 0 & 0 & 0 & 0 & 0 \\
 \frac{1}{2} & 1 & 0 & 0 & 0 & 0 & 0 & 0 & 0 & 0 & 0 \\
 0 & 1 & 1 & 0 & 0 & 0 & 0 & 0 & 0 & 0 & 0 \\
 -\frac{1}{16} & \frac{3}{8} & \frac{3}{2} & 1 & 0 & 0 & 0 & 0 & 0 & 0 & 0 \\
 0 & 0 & 1 & 2 & 1 & 0 & 0 & 0 & 0 & 0 & 0 \\
 \frac{3}{256} & -\frac{5}{128} & \frac{5}{16} & \frac{15}{8} & \frac{5}{2} & 1 & 0 & 0 & 0 & 0 & 0 \\
 0 & 0 & 0 & 1 & 3 & 3 & 1 & 0 & 0 & 0 & 0 \\
 -\frac{5}{2048} & \frac{7}{1024} & -\frac{7}{256} & \frac{35}{128} & \frac{35}{16} & \frac{35}{8} & \frac{7}{2} &
   1 & 0 & 0 & 0 \\
 0 & 0 & 0 & 0 & 1 & 4 & 6 & 4 & 1 & 0 & 0 \\
 \frac{35}{65536} & -\frac{45}{32768} & \frac{9}{2048} & -\frac{21}{1024} & \frac{63}{256} & \frac{315}{128} &
   \frac{105}{16} & \frac{63}{8} & \frac{9}{2} & 1 & 0 \\
 0 & 0 & 0 & 0 & 0 & 1 & 5 & 10 & 10 & 5 & 1 \\
\end{array}
\right).$$
We note that while the original matrix was integer valued, its antecedent has rational values. The inverse of the antecedent matrix is the Riordan array
$$\left(\frac{2+x}{2(1+x)}, \frac{x}{\sqrt{1+x}}\right).$$
\end{example}
\begin{example} We now consider the matrix $(\Psi(x), \Gamma(x))=\left(\frac{1}{1-x}, \frac{x}{(1-x)^2}\right)$. This matrix \seqnum{A085478} begins as follows.
$$\left(
\begin{array}{ccccc}
 1 & 0 & 0 & 0 & 0 \\
 1 & 1 & 0 & 0 & 0 \\
 1 & 3 & 1 & 0 & 0 \\
 1 & 6 & 5 & 1 & 0 \\
 1 & 10 & 15 & 7 & 1 \\
\end{array}
\right).$$ It has its general $(n,k)$ element given by $\binom{n+k}{2k}$.
In this case, the implicit equation
$$\Gamma\left(\frac{x^2}{f(x)}\right)=f(x)$$ becomes the equation
$$\frac{f x^2}{(x^2-f)^2} = f,$$ with solution
$$f(x)=x(1+x).$$
Thus
$$\bar{\phi}(x)=\frac{x^2}{f(x)}=\frac{x^2}{x(1+x)}=\frac{x}{1+x},$$ from which we get
$$\phi(x)=\frac{x}{1-x}.$$
Proceeding as above we now find that $$g(x)=1.$$
Thus the horizontal antecedent of $\left(\frac{1}{1-x}, \frac{x}{(1-x)^2}\right)$ is the integer valued matrix $(1, x(1+x))=\left(\binom{k}{n-k}\right)$. This matrix begins as follows.
$$\left(
\begin{array}{cccccccc}
\mathbf{1} & 0 & 0 & 0 & 0 & 0 & 0 & 0 \\
 0 & 1 & 0 & 0 & 0 & 0 & 0 & 0 \\
 0 & \mathbf{1} & \mathbf{1} & 0 & 0 & 0 & 0 & 0 \\
 0 & 0 & 2 & 1 & 0 & 0 & 0 & 0 \\
 0 & 0 & \mathbf{1} & \mathbf{3} &\mathbf{1} & 0 & 0 & 0 \\
 0 & 0 & 0 & 3 & 4 & 1 & 0 & 0 \\
 0 & 0 & 0 & \mathbf{1} & \mathbf{6} & \mathbf{5} & \mathbf{1} & 0 \\
 0 & 0 & 0 & 0 & 4 & 10 & 6 & 1 \\
\end{array}
\right).$$
Letting $T_{n,k}=\binom{n+k}{2k}$, we have that $T_{\frac{n}{2}, k-\frac{n}{2}}=\binom{k}{n-k}$, which is the general term of $(1, x(1+x))$.
\end{example}
This example show that the matrix $(\binom{k}{n-k})$ is a vertical Riordan antecedent of the binomial matrix $(\binom{n}{k})$ and a horizontal Riordan antecedent of the matrix $(\binom{n+k}{2k})$.

With regard to the first component $g(x)$ of an antecedent, we have the following result.
\begin{proposition} Suppose given a Riordan antecedent $(g(x), f(x))$ of a Riordan array $(\psi(x), \phi(x))$. Then we have
$$g(x)=\frac{f(x)}{x} \bar{\phi}'(x) \psi(\bar{\phi}(x)).$$
\end{proposition}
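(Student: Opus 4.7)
The plan is to read off the defining relation for a vertical antecedent and invert it by composition. By the lemma on the vertical half, if $(g(x),f(x))$ is a vertical antecedent of $(\psi(x),\phi(x))$, then the second component of the half must be $\phi(x) = \text{Rev}(x^2/f(x))$, and the first component must satisfy
$$\psi(x) \;=\; \frac{\phi(x)\,\phi'(x)\,g(\phi(x))}{f(\phi(x))}.$$
This is a single equation for the unknown $g$, with everything else expressible in terms of $f$ and $\phi$.

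Next I would isolate $g$ by substituting $x \mapsto \bar{\phi}(x)$. Since $\phi(\bar{\phi}(x)) = x$, the equation collapses to
$$\psi(\bar{\phi}(x)) \;=\; \frac{x\,\phi'(\bar{\phi}(x))\,g(x)}{f(x)}.$$
At this point the only obstacle is to rewrite $\phi'(\bar{\phi}(x))$ in terms of $\bar{\phi}'(x)$. Differentiating the identity $\phi(\bar{\phi}(x))=x$ and applying the chain rule yields $\phi'(\bar{\phi}(x))\,\bar{\phi}'(x)=1$, hence $\phi'(\bar{\phi}(x))=1/\bar{\phi}'(x)$.

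Substituting this back and solving for $g(x)$ produces
$$g(x) \;=\; \frac{f(x)}{x}\,\bar{\phi}'(x)\,\psi(\bar{\phi}(x)),$$
which is the stated formula. The argument is essentially a one-line manipulation once the vertical half formula is in hand; the only place one must be careful is in applying the inverse function derivative rule, and in recognising that the $\phi$ appearing in the formula for the half coincides with the second component of the given array $(\psi,\phi)$ exactly because we are dealing with a vertical antecedent.
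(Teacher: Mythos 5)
Your proof is correct and follows essentially the same route as the paper's: both start from the relation $\psi(x)=\phi(x)\phi'(x)g(\phi(x))/f(\phi(x))$, compose with $\bar{\phi}$, and use $\phi(\bar{\phi}(x))=x$ together with $\phi'(\bar{\phi}(x))=1/\bar{\phi}'(x)$ to isolate $g(x)$. The only difference is cosmetic (the paper first solves for $u=g\circ\phi$ and then substitutes $\bar{\phi}$, while you substitute first and solve after), so no further comment is needed.
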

\begin{proof}
We have $g(x)=u(\bar{\phi}(x))$ where
$$\frac{\phi\cdot \phi'\cdot u}{f(\phi)}=\psi.$$
This gives
$$u = \frac{f(\phi) \psi}{\phi \cdot \phi'}$$ and so
$$g(x)=u(\bar{\phi}(x))=\frac{f(\phi(\bar{\phi}(x))) \psi(\bar{\phi}(x))}{\phi(\bar{\phi}(x)) \cdot \phi'(\bar{\phi}(x))}$$
The result now follows from $\phi(\bar{\phi}(x))=x$ and $\phi'(\bar{\phi}(x))=\frac{1}{\bar{\phi}'(x)}$.
\end{proof}
\begin{corollary} Let $\Psi(x)$ be a primitive of $\psi(x)$: $\Psi'(x)=\psi(x)$. Then in the circumstances above, we have $$g(x)=\frac{f(x)}{x} \frac{d}{dx} \Psi(\bar{\phi}(x)).$$
\end{corollary}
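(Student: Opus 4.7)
The plan is to deduce this corollary directly from the preceding proposition by a single application of the chain rule. The previous proposition gives us the identity
$$g(x)=\frac{f(x)}{x}\,\bar{\phi}'(x)\,\psi(\bar{\phi}(x)),$$
so the only thing to verify is that the product $\bar{\phi}'(x)\,\psi(\bar{\phi}(x))$ is exactly $\frac{d}{dx}\Psi(\bar{\phi}(x))$ when $\Psi$ is a primitive of $\psi$.

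First I would recall that by hypothesis $\Psi'(x)=\psi(x)$. Then, applying the chain rule to the composition $\Psi\circ\bar{\phi}$, I get
$$\frac{d}{dx}\Psi(\bar{\phi}(x))=\Psi'(\bar{\phi}(x))\,\bar{\phi}'(x)=\psi(\bar{\phi}(x))\,\bar{\phi}'(x).$$
Substituting this expression into the formula from the proposition yields
$$g(x)=\frac{f(x)}{x}\,\frac{d}{dx}\Psi(\bar{\phi}(x)),$$
which is the claim.

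There is no substantial obstacle here: the corollary is a cosmetic repackaging of the proposition that trades the pair $(\bar{\phi}',\psi\circ\bar{\phi})$ for the single derivative of an antiderivative. The only thing to check is that both sides make sense as power series, which follows from $\bar{\phi}(0)=0$ (since $\phi$ is a Riordan second component, so is its compositional inverse) and the fact that $\Psi$ is well-defined as a formal power series primitive of $\psi$ up to an irrelevant additive constant (which disappears under differentiation).
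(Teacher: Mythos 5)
Your proof is correct and is exactly the argument the paper intends: the corollary is left without proof there precisely because it is the one-line chain-rule consequence $\frac{d}{dx}\Psi(\bar{\phi}(x))=\Psi'(\bar{\phi}(x))\bar{\phi}'(x)=\psi(\bar{\phi}(x))\bar{\phi}'(x)$ of the preceding proposition. Your added remark on formal well-definedness (using $\bar{\phi}(0)=0$) is a harmless bonus.
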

\begin{corollary} Let $(g_1(x), f(x))$ and $(g_2(x), f(x))$ be antecedents of the Riordan arrays $(\psi_1(x), \phi(x))$ and $(\psi_2(x), \phi(x))$, respectively. Then we have
$$\frac{g_1(x)}{g_2(x)}=\frac{\psi_1(\bar{\phi}(x))}{\psi_2(\bar{\phi}(x))}.$$
\end{corollary}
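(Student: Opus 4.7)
The plan is to apply the preceding proposition to each of the two antecedents and exploit the fact that both share the same second component $f(x)$ (and hence the same $\phi(x)$).

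First I would write down, directly from the statement of the previous proposition, the two explicit formulas
$$g_1(x)=\frac{f(x)}{x}\,\bar{\phi}'(x)\,\psi_1(\bar{\phi}(x)),\qquad g_2(x)=\frac{f(x)}{x}\,\bar{\phi}'(x)\,\psi_2(\bar{\phi}(x)).$$
This step is legitimate because both pairs $(g_1,f)$ and $(g_2,f)$ are Riordan antecedents built from the same $f(x)$: their associated series $\phi(x)=\operatorname{Rev}(x^2/f(x))$ coincide, and therefore so does $\bar{\phi}(x)=x^2/f(x)$ together with its derivative $\bar{\phi}'(x)$.

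Then I would simply form the quotient. The common prefactor $\frac{f(x)}{x}\bar{\phi}'(x)$ is the same in numerator and denominator, so it cancels outright, leaving
$$\frac{g_1(x)}{g_2(x)}=\frac{\psi_1(\bar{\phi}(x))}{\psi_2(\bar{\phi}(x))},$$
which is exactly the asserted identity. No convergence or invertibility issues arise since $\psi_2(0)\neq 0$ (it is the top-left entry of a Riordan array) guarantees that the denominator is a well-defined invertible power series in a neighbourhood of $0$.

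There is essentially no obstacle here: the corollary is a one-line consequence of the previous proposition, and the whole content of the argument is the observation that the ``$f$-dependent'' factor in the formula for $g$ is independent of $\psi$, so it drops out of any ratio of antecedents that share the same $f$. The only thing worth emphasising in the write-up is precisely this point, since it explains why fixing $f$ (equivalently, fixing $\phi$) is the natural hypothesis for such a clean ratio formula.
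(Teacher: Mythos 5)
Your argument is correct and is exactly the intended derivation: the paper states this corollary without proof, and the evident route is to apply the preceding proposition to both antecedents (noting that a common $f$ forces a common $\bar{\phi}$) and cancel the shared factor $\frac{f(x)}{x}\bar{\phi}'(x)$. Nothing is missing.
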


\section{Further antecedent examples}
\begin{example} We consider the combinatorially important case of the Catalan triangle \seqnum{A033184} given by the Riordan array $(c(x), x c(x))$ where $c(x)=\frac{1-\sqrt{1-4x}}{2x}=\text{Rev}(x(1-x))$ is the generating function of the Catalan numbers $C_n=\frac{1}{n+1}\binom{2n}{n}$. This matrix has general element $\frac{k+1}{n+1} \binom{2n-k}{n-k}$ and it begins as follows.
$$\left(
\begin{array}{cccccc}
 1 & 0 & 0 & 0 & 0 & 0 \\
 1 & 1 & 0 & 0 & 0 & 0 \\
 2 & 2 & 1 & 0 & 0 & 0 \\
 5 & 5 & 3 & 1 & 0 & 0 \\
 14 & 14 & 9 & 4 & 1 & 0 \\
 42 & 42 & 28 & 14 & 5 & 1 \\
\end{array}
\right).$$
We take $(\psi(x), \phi(x))=(c(x), xc(x))$ and we seek the vertical antecedent $(g(x), f(x))$ of this matrix.
We have
$$f(x)=\frac{x^2}{\text{Rev}(xc(x))}=\frac{x^2}{x(1-x)}=\frac{x}{1-x}.$$
We have $\phi'(x)=(xc(x))'=\frac{1}{\sqrt{1-4x}}$, while
$$f(\phi(x))=\frac{1-2x-\sqrt{1-4x}}{2x}.$$
Solving for $g(x)$, we obtain
$$g(x)=\frac{1-2x}{(1-x)^2}.$$
Thus a vertical Riordan antecedent of the Catalan matrix $(c(x), xc(x))$ is given by the Riordan array
$$\left(\frac{1-2x}{(1-x)^2}, \frac{x}{1-x}\right).$$
This matrix begins
$$\left(
\begin{array}{ccccccc}
 1 & 0 & 0 & 0 & 0 & 0 & 0 \\
 0 & 1 & 0 & 0 & 0 & 0 & 0 \\
 -1 & 1 & 1 & 0 & 0 & 0 & 0 \\
 -2 & 0 & 2 & 1 & 0 & 0 & 0 \\
 -3 & -2 & 2 & 3 & 1 & 0 & 0 \\
 -4 & -5 & 0 & 5 & 4 & 1 & 0 \\
 -5 & -9 & -5 & 5 & 9 & 5 & 1 \\
\end{array}
\right).$$
We find that the horizontal Riordan antecedent of $(c(x), xc(x))$ is given by
$$\left(\frac{2-5x+3x^2}{2-4x}, x\sqrt{1-x}\right)^{-1}.$$ This array begins
$$\left(
\begin{array}{cccccccc}
 1 & 0 & 0 & 0 & 0 & 0 & 0 & 0 \\
 \frac{1}{2} & 1 & 0 & 0 & 0 & 0 & 0 & 0 \\
 0 & 1 & 1 & 0 & 0 & 0 & 0 & 0 \\
 -\frac{21}{16} & \frac{7}{8} & \frac{3}{2} & 1 & 0 & 0 & 0 & 0 \\
 -5 & 0 & 2 & 2 & 1 & 0 & 0 & 0 \\
 -\frac{3861}{256} & -\frac{429}{128} & \frac{33}{16} & \frac{27}{8} & \frac{5}{2} & 1 & 0 & 0 \\
 -42 & -14 & 0 & 5 & 5 & 3 & 1 & 0 \\
 -\frac{230945}{2048} & -\frac{46189}{1024} & -\frac{2431}{256} & \frac{715}{128} & \frac{143}{16} & \frac{55}{8}
   & \frac{7}{2} & 1 \\
\end{array}
\right).$$
\end{example}
\begin{example} We finish with the case of the Riordan array $\left(\frac{1}{1-x-x^2},\frac{x}{1-x-x^2}\right)$. This matrix \seqnum{A037027} begins
$$\left(
\begin{array}{ccccc}
 1 & 0 & 0 & 0 & 0 \\
 1 & 1 & 0 & 0 & 0 \\
 2 & 2 & 1 & 0 & 0 \\
 3 & 5 & 3 & 1 & 0 \\
 5 & 10 & 9 & 4 & 1 \\
\end{array}
\right).$$
Letting $\Gamma(x)=\frac{x}{1-x-x^2}$, we find that
$$f(x)=\frac{1}{2}(x^2+x \sqrt{4+5x^2}).$$
Also we obtain $$\bar{\phi}(x)=\frac{2x^2}{x^2+x \sqrt{4+5x^2}}$$ and
$$\phi(x)=\frac{x}{\sqrt{1-x-x^2}}.$$
Finally we obtain
$$g(x)=\frac{4+6x^2+2x \sqrt{4+5x^2}}{4+5x^2+x\sqrt{4+5x^2}}.$$
Thus a horizontal antecedent Riordan array to the Fibonacci array $\left(\frac{1}{1-x-x^2},\frac{x}{1-x-x^2}\right)$ is given by the array
$$\left(\frac{4+6x^2+2x \sqrt{4+5x^2}}{4+5x^2+x\sqrt{4+5x^2}},\frac{1}{2}(x^2+x \sqrt{4+5x^2})\right).$$
This array begins
$$\left(
\begin{array}{ccccccccc}
 1 & 0 & 0 & 0 & 0 & 0 & 0 & 0 & 0 \\
 \frac{1}{2} & 1 & 0 & 0 & 0 & 0 & 0 & 0 & 0 \\
 0 & 1 & 1 & 0 & 0 & 0 & 0 & 0 & 0 \\
 -\frac{5}{16} & \frac{7}{8} & \frac{3}{2} & 1 & 0 & 0 & 0 & 0 & 0 \\
 0 & 0 & 2 & 2 & 1 & 0 & 0 & 0 & 0 \\
 \frac{75}{256} & -\frac{45}{128} & \frac{17}{16} & \frac{27}{8} & \frac{5}{2} & 1 & 0 & 0 & 0 \\
 0 & 0 & 0 & 3 & 5 & 3 & 1 & 0 & 0 \\
 -\frac{625}{2048} & \frac{275}{1024} & -\frac{95}{256} & \frac{203}{128} & \frac{95}{16} & \frac{55}{8}
   & \frac{7}{2} & 1 & 0 \\
 0 & 0 & 0 & 0 & 5 & 10 & 9 & 4 & 1 \\
\end{array}
\right).$$
The first column of this array, with generating function $g(x)$, begins
$$1,\frac{1}{2},0,-\frac{5}{16},0,\frac{75}{256},0,-\frac{625}{2048},0,\frac{21875}{65536},0,\ldots.$$
Its Hankel transform $h_n$ is given as follows.
\begin{proposition} The Hankel transform of the expansion of $g(x)$ is given by
$$h_n = (-1)^{\binom{n+1}{2}}\frac{ 5^{2 \lfloor \frac{n^2}{4} \rfloor}}{4^{n^2}}.$$
\end{proposition}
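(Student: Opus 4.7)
The plan is to simplify $g(x)$ into a recognizable form, exploit the vanishing of its even Taylor coefficients to block-reduce the Hankel matrix, and evaluate the remaining determinants via orthogonal polynomial theory.

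\emph{Simplification.} Rationalizing by multiplying numerator and denominator by $(4+5x^2)-x\sqrt{4+5x^2}$ collapses $g(x)$ to
$$g(x)=1+\frac{x}{\sqrt{4+5x^2}}.$$
Expanding the square root via the binomial series yields $a_0=1$, $a_{2k}=0$ for $k\ge 1$, and $a_{2n+1}=b_n$ with $b_n=\tfrac{1}{2}\binom{2n}{n}(-5/16)^n$. In particular $\sum b_n x^n=1/\sqrt{4+5x}$.

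\emph{Block reduction.} Conjugating $H_n=(a_{i+j})$ by the permutation placing even-indexed positions before odd-indexed ones produces a matrix of the form $\left(\begin{smallmatrix} E & B \\ B^T & 0\end{smallmatrix}\right)$, where the only nonzero entry of $E$ is $E_{00}=1$ and $B$ is Hankel in $(b_n)$. When $n=2m-1$, $B$ is square of size $m$; a swap of the two block rows (sign $(-1)^{m^2}$) yields a block-triangular matrix with determinant $\det(B)^2$. When $n=2m$, a Laplace expansion along rows $\{0,m+1,\dots,2m\}$ is forced (by the vanishing of the other rows across columns $0,\dots,m$) to select columns $\{0,\dots,m\}$, and produces two identical $m\times m$ Hankel minors in $(b_{i+j+1})$; the Laplace sign is $(-1)^{m(2m+1)}$. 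Since both $(-1)^{m^2}$ and $(-1)^{m(2m+1)}$ equal $(-1)^m=(-1)^{\binom{n+1}{2}}$, writing $D_m^{(r)}:=\det(b_{i+j+r})_{0\le i,j<m}$,
$$h_{2m-1}=(-1)^m(D_m^{(0)})^2,\qquad h_{2m}=(-1)^m(D_m^{(1)})^2.$$

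\emph{Evaluation.} The change of variable $z=-5y/16$ in the standard identity $\binom{2n}{n}=\int_0^4 y^n\,dy/(\pi\sqrt{y(4-y)})$ gives
$$b_n=\int_{-5/4}^{0} z^n\,\frac{dz}{\pi\sqrt{-z(5+4z)}},$$
identifying the $b_n$ as moments of a mass-$\tfrac12$ arcsine measure on $[-5/4,0]$. Its monic orthogonal polynomials arise from the monic Chebyshev polynomials of the first kind via the affine substitution $u=(8z+5)/5$, with Jacobi recurrence constants $\alpha_k=-5/8$, $\beta_1=25/128$, and $\beta_k=25/256$ for $k\ge 2$. The product formula for Hankel determinants then gives
$$D_m^{(0)}=b_0^{\,m}\prod_{k=1}^{m-1}\beta_k^{m-k}=\frac{5^{m(m-1)}}{2^{(2m-1)^2}}.$$
For $D_m^{(1)}$ one uses the classical identity $D_m^{(1)}=(-1)^m p_m(0)\,D_m^{(0)}$, proved by LU-decomposing $(b_{i+j+1})$ through the lower-triangular matrix of orthogonal polynomial coefficients to get a tridiagonal product whose determinant splits as $\prod_k\|p_k\|^2$ times the determinant of the truncated monic Jacobi matrix, namely $(-1)^m p_m(0)$. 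Since $p_m(0)=(5/8)^m T_m(1)/2^{m-1}=5^m/2^{4m-1}$, this yields $D_m^{(1)}=(-1)^m\,5^{m^2}/4^{2m^2}$.

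Squaring and substituting verifies the stated formula in both parities, the exponent arithmetic reducing to $m+(m-1)(4m-1)=(2m-1)^2$, $\lfloor(2m-1)^2/4\rfloor=m(m-1)$, and $\lfloor(2m)^2/4\rfloor=m^2$. The main obstacle is the orthogonal polynomial computation in the evaluation step — especially the shifted determinant $D_m^{(1)}$ — as the block reduction is sign bookkeeping and the opening simplification is routine.
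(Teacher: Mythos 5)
Your proof is correct, and I have checked the key computations: the rationalization to $g(x)=1+x/\sqrt{4+5x^2}$, the block structure and both Laplace/row-swap signs (each reducing to $(-1)^m=(-1)^{\binom{n+1}{2}}$), the identification of $b_n=\tfrac12\binom{2n}{n}(-5/16)^n$ as moments of a mass-$\tfrac12$ arcsine measure on $[-5/4,0]$ with $\beta_1=25/128$, $\beta_k=25/256$, and the resulting values $D_m^{(0)}=5^{m(m-1)}/2^{(2m-1)^2}$ and $D_m^{(1)}=(-1)^m5^{m^2}/4^{2m^2}$, which square to the stated formula. However, your route is genuinely different from the paper's. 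The paper derives the Hankel transform from the Jacobi continued fraction expansion of $g(x)$ itself, with parameters $\mathcal{J}(1/2,3/4,-1,1,\ldots;\,-1/4,-25/16,-1/16,-25/16,\ldots)$, and then (implicitly) applies the Heilermann formula $h_n=\prod_{k=1}^{n}\beta_k^{n+1-k}$; the periodic tail of the $\beta$-sequence directly produces the $5^{2\lfloor n^2/4\rfloor}/4^{n^2}$ pattern. You instead bypass the J-fraction of $g$ entirely: you exploit the fact that $g$ is odd apart from its constant term to block-reduce $H_n$ to squares of (shifted) Hankel determinants of the half-sequence $(b_n)$, and evaluate those via the Chebyshev orthogonal polynomials of a translated arcsine measure, using the classical identity $D_m^{(1)}=(-1)^mp_m(0)D_m^{(0)}$. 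What the paper's approach buys is brevity once the J-fraction is known (which the paper states but does not prove); what yours buys is a self-contained derivation that replaces the irregular initial segment of the J-fraction with the clean periodic recurrence of Chebyshev polynomials, at the cost of the determinantal bookkeeping. The only place where you are terser than I would like is the LU-decomposition justification of $D_m^{(1)}=(-1)^mp_m(0)D_m^{(0)}$; this identity also follows in one line from the determinantal representation of $p_m(z)$ evaluated at $z=0$, which you might cite instead.
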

This follows from the following result.
\begin{proposition} The generating function $g(x)$ has the following Jacobi continued fraction expansion.
$$g(x)=
\cfrac{1}{1-\frac{x}{2}+
\cfrac{\frac{x^2}{4}}{1-\frac{3x}{4}+
\cfrac{\frac{25x^2}{16}}{1+x+
\cfrac{\frac{x^2}{16}}{1-x+
\cfrac{\frac{25x^2}{16}}{1+x+
\cfrac{\frac{x^2}{16}}{1-\cdots}}}}}}.$$
Here, the Jacobi parameters are
$$\mathcal{J}(1/2,3/4,-1,1,-1,1,-1,1,\ldots;-1/4,-25/16,-1/16,-25/16,-1/16,-25/16,\ldots).$$
\end{proposition}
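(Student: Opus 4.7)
The plan is to reduce the identity to a single algebraic verification by exploiting the period-$2$ structure of the proposed J-fraction tail. First, I would simplify the closed form of $g(x)$. Setting $y = \sqrt{4+5x^2}$, one sees that $4+6x^2+2xy = y^2 + x^2 + 2xy = (y+x)^2$ and $4+5x^2+xy = y(y+x)$, so the given rational expression collapses to $g(x) = 1 + x/\sqrt{4+5x^2}$. This is the target that the continued fraction must match.

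Next I would exploit the fact that from level $2$ onward the Jacobi parameters are period-$2$: $\alpha_n = (-1)^n$ and $\beta_{n+1}$ alternates between $-1/16$ and $-25/16$. Let $U(x)$ denote the tail of the continued fraction starting at level $2$ and $V(x)$ the tail starting at level $3$. The periodicity gives the coupled system
$$U = 1 + x + \frac{x^2/16}{V}, \qquad V = 1 - x + \frac{25 x^2/16}{U}.$$
Eliminating $V$ produces a quadratic
$$16(1-x)\, U^2 + 8(5x^2 - 2)\, U - 25x^2(1+x) = 0,$$
whose discriminant simplifies to $64(4+5x^2)$ --- the perfect-square structure being the main piece of luck. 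Taking the branch with $U(0)=1$ yields
$$U(x) = \frac{2 - 5x^2 + \sqrt{4+5x^2}}{4(1-x)}.$$

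With $U$ in hand, I would back-substitute upward through the two non-periodic top levels. Setting $C_1(x) = 1 - 3x/4 + (25x^2/16)/U(x)$ and rationalizing via the factorization $(2-5x^2)^2 - (4+5x^2) = -25x^2(1-x)(1+x)$ collapses this to $C_1(x) = (2 + x + 2x^2 + \sqrt{4+5x^2})/[4(1+x)]$. A second rationalization, using $(2+x+2x^2)^2 - (4+5x^2) = 4x(1+x)(1+x^2)$, turns $g(x)^{-1} = 1 - x/2 + (x^2/4)/C_1(x)$ into $(5x^2 + 4 - x\sqrt{4+5x^2})/[4(1+x^2)]$. A final conjugation based on $(5x^2+4)^2 - x^2(4+5x^2) = 4(5x^2+4)(1+x^2)$ then recovers $g(x) = 1 + x/\sqrt{4+5x^2}$, matching the closed form from the first step.

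The only real obstacle is bookkeeping the three surd-rationalization steps; each rests on spotting a factorization of the form $P(x)^2 - (4+5x^2) = $ (nicely factored polynomial) for the $P(x)$ appearing at that level. Once the factorizations are identified, the arithmetic is elementary. Convergence as a formal power series is automatic: the periodic tail $U(x)$ is uniquely pinned down as the solution of its quadratic with $U(0)=1$, and the back-substitution then identifies the full continued fraction with $g(x)$, establishing the claimed J-fraction expansion.
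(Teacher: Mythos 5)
Your argument is correct, and it is worth noting that the paper itself offers no proof of this proposition at all --- it is simply asserted (as the source of the preceding Hankel-transform formula), so your write-up supplies a verification that the paper omits. Each of your steps checks out: the factorization $4+6x^2+2xy=(y+x)^2$, $4+5x^2+xy=y(y+x)$ with $y=\sqrt{4+5x^2}$ does reduce $g$ to $1+x/\sqrt{4+5x^2}$ (consistent with the listed expansion $1,\tfrac12,0,-\tfrac{5}{16},0,\tfrac{75}{256},\dots$); the elimination of $V$ gives exactly $16(1-x)U^2+8(5x^2-2)U-25x^2(1+x)=0$ with discriminant $64(4+5x^2)$; and the three conjugation identities $(2-5x^2)^2-(4+5x^2)=-25x^2(1-x)(1+x)$, $(2+x+2x^2)^2-(4+5x^2)=4x(1+x)(1+x^2)$, and $(4+5x^2)^2-x^2(4+5x^2)=4(4+5x^2)(1+x^2)$ are all correct and lead back to $1+x/\sqrt{4+5x^2}$. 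Two small points of polish: the discriminant $64(4+5x^2)$ is not literally a perfect square --- what matters is that its square root lies in the quadratic extension $\mathbb{Q}(x)(\sqrt{4+5x^2})$ already containing $g$, so the phrase ``perfect-square structure'' should be reworded; and the final convergence remark deserves one more sentence making explicit the standard fact that, because each level contributes a factor of $x^2$, the $n$-th convergent of the J-fraction agrees with the full fraction through order $x^{2n}$, so replacing the periodic tail by the power-series root $U$ of its quadratic with $U(0)=1$ is legitimate coefficientwise. With those cosmetic fixes the proof is complete.
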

Thus the elements of the expansion of $g(x)$ are the moments of a family of orthogonal polynomials.
\end{example}

\section{Acknowledgements}
The author completed this article while a guest at the Applied Algebra and Optimization Research Center (AORC) of Sungkyunkwan University, Suwon, South Korea. He gratefully acknowledges their hospitality.

\bigskip
\hrule

\noindent 2010 {\it Mathematics Subject Classification}: Primary
15B36; Secondary 11B83, 11C20.
\noindent \emph{Keywords:} Riordan group, Riordan array, central coefficients, Lagrange inversion.

\end{document}